\newtheorem{thm}{Theorem}[section]
\newtheorem{lem}[thm]{Lemma}
\newtheorem{cor}[thm]{Corollary}
\newtheorem{Q}[thm]{Question}
\theoremstyle{remark}
\newtheorem*{rem}{Remark}
\newtheorem*{ack}{Acknowledgements}
\newcommand{\es}{\emptyset}
\newcommand{\A}{\mathcal{A}}
\newcommand{\la}{\lambda}
\newcommand{\mb}{\mathbb}
\newcommand{\iy}{\infty}
\newcommand{\beqa}{\begin{eqnarray*}}
	\newcommand{\eeqa}{\end{eqnarray*}}
\newcounter{cnt1}
\newcounter{cnt2}
\newcounter{cnt3}
\newcounter{cnt4}
\newcommand{\blr}{\begin{list}{$($\roman{cnt1}$)$} {\usecounter{cnt1}
			\setlength{\topsep}{0pt} \setlength{\itemsep}{0pt}}}
	\newcommand{\blR}{\begin{list}{\Roman{cnt4}.\ } {\usecounter{cnt4}
				\setlength{\topsep}{0pt} \setlength{\itemsep}{0pt}}}
		\newcommand{\bla}{\begin{list}{$(\alph{cnt2})$} {\usecounter{cnt2}
					\setlength{\topsep}{0pt} \setlength{\itemsep}{0pt}}}
			\newcommand{\bln}{\begin{list}{$($\arabic{cnt3}$)$} {\usecounter{cnt3}
						\setlength{\topsep}{0pt} \setlength{\itemsep}{0pt}}}
				\newcommand{\el}{\end{list}}
\begin{document}
				
				
				\baselineskip=17pt
				
				
			\title[On Kakutani's characterization]{On Kakutani's characterization of the closed linear sublattices of $C(X)$ -- Revisited}
				\author[T. Thomas]{Teena Thomas}
				\address{Indian Institute of Technology Hyderabad\\ Sangareddy\\Telangana, India-502284}
				\email{ma19resch11003@iith.ac.in; thomasteena796@gmail.com}

			\begin{abstract}
				In his paper [Concrete representation of abstract {$(M)$}-spaces. ({A} characterization of the space of continuous functions.), \textit{Ann. of Math.}, {\bf 42 (2)} (1941), 994--1024.], S. Kakutani gave an interesting representation of the closed linear sublattices of the space of real-valued continuous functions on a compact Hausdorff space, which is determined by a set of algebraic relations. In this short note, we present a simple proof of this representation without using any profound lattice theory or functional analysis results, making this proof accessible even to undergraduate students. 
				
			\end{abstract}
		     \subjclass[2000]{46E15, 46B42}
			 	
			 	\keywords{space of real-valued continuous functions, compact Hausdorff space, sublattices, subalgebras}
			 	
			 	\maketitle

				\section{Preliminaries}\label{S1}
				Let $X$ be a compact Hausdorff space. We denote the Banach space (in other words, a complete normed linear space) of all real-valued continuous functions on $X$, equipped with the supremum norm, by $C(X)$. It is well-known that $C(X)$ is a lattice under the operation of pointwise maximum or minimum of a pair of functions in $C(X)$ and an algebra under the operation of pointwise multiplication of a pair of functions in $C(X)$. All the subspaces in this note are assumed to be closed with respect to the norm topology. We say that a closed linear subspace $\mathcal{A}$ of $C(X)$ is a {\it sublattice} of $C(X)$ if $\mathcal{A}$ is a lattice in its own right under the same lattice operations as in $C(X)$. A closed linear subspace $\mathcal{A}$ of $C(X)$ is said to be a {\it subalgebra} of $C(X)$ if $\mathcal{A}$ is an algebra in its own right under the same algebra operations as in $C(X)$. Kakutani presented an algebraic characterization of the closed linear sublattices of $C(X)$ as follows$\colon$
				
				\begin{thm}[{\cite[Theorem~3, pg.~1005]{Ka}}]\label{T2.0}
					Let $X$ be a compact Hausdorff space. Let $\mathcal{A}$ be a closed linear subspace of $C(X)$. Then $\mathcal{A}$ is a sublattice of $C(X)$ if and only if there exists an index set $I$ and co-ordinates $(t_i,s_i,\lambda_i) \in X \times X\times [0,1]$, for each $i \in I$ such that 
					\begin{equation}\label{Eqn1.0}
						\mathcal{A} = \{f \in C(X)\colon f(t_i)= \lambda_i f(s_i)\mbox{, for each }i \in I\}.
					\end{equation}	
				\end{thm}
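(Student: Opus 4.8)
The plan is to prove both implications, disposing of the reverse direction (algebraic relations $\Rightarrow$ sublattice) first, since it is elementary, and then treating the forward direction, which carries all the content. For the reverse direction, suppose $\mathcal{A}=\{f\in C(X): f(t_i)=\lambda_i f(s_i),\ i\in I\}$ with each $\lambda_i\in[0,1]$. Each defining condition is the kernel of the continuous linear functional $f\mapsto f(t_i)-\lambda_i f(s_i)$, so $\mathcal{A}$ is automatically a closed linear subspace. To see it is a sublattice I would check stability under $\vee$ and $\wedge$: for $f,g\in\mathcal{A}$ and each $i$, since $\lambda_i\ge 0$ the scalar passes through the maximum, giving $(f\vee g)(t_i)=\max(\lambda_i f(s_i),\lambda_i g(s_i))=\lambda_i(f\vee g)(s_i)$, and symmetrically for $\wedge$. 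The nonnegativity of $\lambda_i$ is precisely what makes this work.

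For the forward direction, I would let $I$ be the set of all triples $(t,s,\lambda)\in X\times X\times[0,1]$ for which $g(t)=\lambda g(s)$ holds for every $g\in\mathcal{A}$, and let $\mathcal{B}$ be the right-hand side of \eqref{Eqn1.0} for this $I$. The inclusion $\mathcal{A}\subseteq\mathcal{B}$ is immediate, so everything reduces to $\mathcal{B}\subseteq\mathcal{A}$. The main engine is a lattice version of the Stone--Weierstrass theorem, which I would prove from scratch using only compactness and the stability of $\mathcal{A}$ under $\vee,\wedge$: a function $f\in C(X)$ lies in the closed sublattice $\mathcal{A}$ as soon as, for every pair $x,y\in X$, there is some $g\in\mathcal{A}$ with $g(x)=f(x)$ and $g(y)=f(y)$. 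Concretely, fix $\varepsilon>0$; for each $x$ a compactness argument produces (via a finite $\wedge$ of two-point matchers) some $h_x\in\mathcal{A}$ with $h_x(x)=f(x)$ and $h_x<f+\varepsilon$ everywhere, and a finite $\vee$ of the $h_x$ then lands within $\varepsilon$ of $f$; closedness of $\mathcal{A}$ finishes it.

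It then remains to verify the two-point hypothesis for $f\in\mathcal{B}$. The key device is to study, for fixed $x,y$, the set $L_{x,y}=\{(g(x),g(y)):g\in\mathcal{A}\}\subseteq\mathbb{R}^2$: it is a linear subspace (as $\mathcal{A}$ is linear) which is moreover closed under the coordinatewise positive part $(a,b)\mapsto(a^+,b^+)$, because $g\in\mathcal{A}$ forces $g\vee 0\in\mathcal{A}$. I would then classify the linear subspaces of $\mathbb{R}^2$ enjoying this closure property: besides $\{0\}$ and $\mathbb{R}^2$, the only admissible lines are the two coordinate axes and the lines of strictly positive slope, since a line of negative slope fails the positive-part test. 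Each admissible possibility records exactly a relation $g(t)=\lambda g(s)$ with $\lambda\in[0,1]$ (orienting the pair so that the recorded slope is $\le 1$), hence a triple in $I$; as $f\in\mathcal{B}$ satisfies every relation in $I$, the point $(f(x),f(y))$ must lie on $L_{x,y}$, which is precisely the two-point matching required. This gives $\mathcal{B}\subseteq\mathcal{A}$, whence $\mathcal{A}=\mathcal{B}$.

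I expect the main obstacle to be the forward direction, and within it two linked points: correctly establishing the lattice Stone--Weierstrass lemma through the compactness-and-finite-$\vee\wedge$ argument, and recognizing that lattice stability translates into the positive-part closure of $L_{x,y}$, which in turn forces the slope to be nonnegative and produces exactly the $\lambda\in[0,1]$ relations. Getting the bookkeeping of orientations right, so that every relation is normalized to $\lambda\in[0,1]$ and the degenerate single-point cases ($x=y$, or a coordinate forced to vanish) are absorbed into triples of the form $(x,y,0)$, is the fiddly part, but it becomes routine once the two structural facts above are in place.
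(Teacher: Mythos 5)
Your proposal is correct and follows essentially the same route as the paper: define $I$ as the set of all valid triples, invoke the two-point lattice approximation lemma (the paper's Lemma~\ref{L2}), and reduce everything to classifying the sublattices $\{(g(x),g(y)):g\in\mathcal{A}\}$ of $\mathbb{R}^2$, which are exactly the lines of nonnegative slope together with $\{0\}$ and $\mathbb{R}^2$. The only cosmetic difference is that you treat all possibilities for $L_{x,y}$ uniformly, whereas the paper splits into cases according to whether $\mathcal{A}$ separates points; your version is, if anything, slightly cleaner.
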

				
				In Section~\ref{Sec3}, we provide an elementary proof of the result above. In fact, we exploit the structure of the closed linear sublattices of the two dimensional vector space, $\mb{R}^{2}$, in this proof. Moreover, in this proof, we apply a few auxiliary results whose proofs assume the knowledge of a basic course in real analysis and point-set topology.
				
				Let us first understand what the sublattices look like in $\mb{R}^{2}$ under the lattice operation of co-ordinate-wise maximum or minimum of a pair of vectors in $\mb{R}^{2}$. For simplicity, we denote the linear span of a vector $(a,b) \in \mb{R}^{2}$ as $span\{(a,b)\}$. Let $\A$ be a closed linear subspace of $\mb{R}^{2}$. Basic knowledge of linear algebra tells us that the possible dimensions of $\mathcal{A}$ are $0,1$ or $2$. If the dimension of $\mathcal{A}$ is either $0$ or $2$, then $\mathcal{A} = \{(0,0)\}$ or $\mathcal{A} = \mathbb{R}^{2}$ respectively; obviously, these subspaces are sublattices of $\mb{R}^{2}$. Assume that $\mathcal{A}$ is a one dimensional subspace of $\mathbb{R}^{2}$. We know that if $\A = span\{(a,b)\}$ for some $(a,b) \in \mb{R}^{2}$, then $\A = span\{(\la a,\la b)\}$ for each $\la \in \mb{R}\backslash \{0\}$. Therefore, without loss of generality, there exists $a,b \in [-1,1]$ such that $\A = span\{(a,b)\}$. It is easy to see that if $-1\leq a,b \leq 0$ or $0\leq a,b\leq 1$ then $\A$ is a sublattice of $\mb{R}^{2}$. Furthermore, if $-1 \leq a < 0 < b \leq 1$ then the minimum of $(a,b)$ and $(2a,2b)$ is $(2a,b) \not\in \mathcal{A}$; hence $\mathcal{A}$ is not a sublattice of $\mb{R}^{2}$. Using a similar argument, if $-1 \leq b < 0 < a \leq 1$ then $\A$ is not a sublattice of $\mb{R}^{2}$. Therefore, without loss of generality, we arrive at the following conclusion. 
				
				\begin{lem}\label{L1.1}
					Consider $\mathbb{R}^{2}$ as a lattice under the operation of co-ordinate-wise maximum or minimum of a pair of vectors in $\mb{R}^{2}$. Then the only closed linear sublattices of $\mathbb{R}^{2}$ are $\{(0,0)\}$, $\mathbb{R}^{2}$ and $span\{(a,b)\}$, for those $(a,b) \in \mathbb{R}^{2}$ satisfying $0\leq a,b \leq 1$.
				\end{lem}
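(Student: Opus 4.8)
The plan is to verify the two directions of the set equality claimed in Lemma~\ref{L1.1}, essentially by organizing and slightly extending the case analysis already sketched in the preceding paragraph. The statement asserts that the complete list of closed linear sublattices of $\mb{R}^{2}$ consists of the trivial subspace $\{(0,0)\}$, the whole space $\mb{R}^{2}$, and the one-dimensional subspaces $span\{(a,b)\}$ with $0 \leq a,b \leq 1$. Since dimensions $0$ and $2$ force $\A = \{(0,0)\}$ or $\A = \mb{R}^{2}$ by elementary linear algebra, and both of these are trivially closed under coordinate-wise maximum and minimum, the only substantive content lies in the one-dimensional case. So first I would dispose of the dimensions $0$ and $2$ in a single sentence, leaving the one-dimensional subspaces to be characterized.

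For the one-dimensional case, I would fix $\A = span\{(a,b)\}$ and, as in the excerpt, normalize so that $a,b \in [-1,1]$ (using that scaling the generator by a nonzero $\la$ does not change the span, and that we may always rescale so the larger-magnitude coordinate lies in $[-1,1]$). The forward inclusion --- that every sublattice on the list is genuinely a sublattice --- reduces to checking that when $0 \leq a,b \leq 1$ (the sign-consistent case), $span\{(a,b)\}$ is closed under the two lattice operations. The key computation is that for scalars $\la,\mu \in \mb{R}$ the coordinate-wise maximum of $\la(a,b)$ and $\mu(a,b)$ is $(\max\{\la a, \mu a\}, \max\{\la b, \mu b\})$, and because $a$ and $b$ are both nonnegative the maximum in each coordinate is achieved by the \emph{same} scalar (namely $\max\{\la,\mu\}$ if we momentarily assume $a,b>0$, with the degenerate coordinates handled separately), so the result again lies in $span\{(a,b)\}$; the analogous statement holds for the minimum. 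The reverse inclusion --- that no other one-dimensional subspace qualifies --- is exactly the mixed-sign obstruction already recorded: if the two coordinates of the generator have strictly opposite signs, then taking the minimum (or maximum) of two suitable scalar multiples produces a vector whose coordinate ratio differs from $b/a$, hence a vector outside $\A$, as the explicit example with $(a,b)$ and $(2a,2b)$ shows.

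The main obstacle, such as it is, is not a deep one but a bookkeeping matter: handling the degenerate situations where one coordinate of the generator vanishes (so $\A$ is a coordinate axis $span\{(1,0)\}$ or $span\{(0,1)\}$) and making sure these are correctly subsumed under the condition $0 \leq a,b \leq 1$ rather than excluded. One must confirm that the coordinate axes are indeed sublattices --- which they are, since along an axis one coordinate is identically zero and the maximum or minimum in the nonzero coordinate is again a scalar multiple of the generator --- and that the normalization procedure and the sign analysis do not inadvertently rule them out. Once these boundary cases are checked, the ``without loss of generality'' reductions of the preceding paragraph assemble into a complete proof: every one-dimensional sublattice is, after normalization, of the form $span\{(a,b)\}$ with $0 \leq a,b \leq 1$, and conversely every such span is a sublattice, which together with the dimension $0$ and $2$ cases yields the asserted classification.
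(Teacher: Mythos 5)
Your proposal is correct and follows essentially the same route as the paper's own argument (the paragraph preceding the lemma): dispose of dimensions $0$ and $2$, normalize the generator of a one-dimensional subspace into $[-1,1]^2$, verify closure under coordinate-wise max/min when the coordinates are sign-consistent, and use the minimum of $(a,b)$ and $(2a,2b)$ to exclude the mixed-sign case. Your explicit treatment of the degenerate coordinate-axis cases is a small but welcome addition of care over the paper's sketch.
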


				We can also list all possible closed linear subalgebras of $\mathbb{R}^{2}$ as follows$\colon$
				\begin{lem}[{\cite[Lemma~4.46]{Fo}}]\label{L1}
					Consider $\mathbb{R}^{2}$ as an algebra under co-ordinate-wise addition and multiplication of a pair of vectors in $\mb{R}^{2}$. Then the only subalgebras of $\mathbb{R}^{2}$ are $\{(0,0)\}$, $\mathbb{R}^{2}$, $span\{(1,0)\}$, $span\{(0,1)\}$ and $span\{(1,1)\}$.
				\end{lem}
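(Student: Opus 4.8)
The plan is to classify the subalgebras purely by their vector-space dimension, exactly as was done for sublattices in Lemma~\ref{L1.1}, and then to impose the extra requirement that the subspace be closed under coordinate-wise multiplication. Since $\mb{R}^2$ is two-dimensional, any linear subspace $\A$ has dimension $0$, $1$, or $2$. The dimension $0$ and dimension $2$ cases are immediate: $\A = \{(0,0)\}$ and $\A = \mb{R}^2$ are both manifestly closed under the product, since the product of any two vectors in either set lands back in the same set. So the entire content of the lemma lies in the one-dimensional case.

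Suppose then that $\A = span\{(a,b)\}$ for some nonzero $(a,b) \in \mb{R}^2$. The defining feature of a subalgebra is closure under multiplication, and since every element of $\A$ is a scalar multiple of $(a,b)$, it suffices to check that the single product $(a,b)\cdot(a,b) = (a^2,b^2)$ lies in $\A$; linearity then forces closure for all products. Hence $\A$ is a subalgebra if and only if there exists $\la \in \mb{R}$ with
\[
(a^2,b^2) = \la(a,b),
\]
that is, $a^2 = \la a$ and $b^2 = \la b$ simultaneously.

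The remaining work is a short case analysis on whether the coordinates vanish. If both $a$ and $b$ are nonzero, the two scalar equations give $\la = a$ and $\la = b$, forcing $a = b$, so $(a,b)$ is a nonzero multiple of $(1,1)$ and $\A = span\{(1,1)\}$. If exactly one coordinate vanishes---say $a = 0$ and $b \ne 0$---then the equation in the first slot is automatic and the second is solved by $\la = b$, so $(a,b)$ is a multiple of $(0,1)$ and $\A = span\{(0,1)\}$; the symmetric case gives $\A = span\{(1,0)\}$. The case $a = b = 0$ is excluded since $(a,b) \ne (0,0)$. Conversely, each of $span\{(1,0)\}$, $span\{(0,1)\}$, and $span\{(1,1)\}$ is visibly closed under coordinate-wise multiplication, so all three are genuine subalgebras. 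Collecting the three dimension cases then yields precisely the list in the statement.

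I do not anticipate a serious obstacle, as the argument is finite-dimensional and elementary. The only point demanding a little care is making the reduction to the single product $(a^2,b^2)$ explicit, so that the ``if and only if'' is established in both directions rather than merely verifying that the listed spans happen to be closed; framing the condition as the solvability of $(a^2,b^2) = \la(a,b)$ makes both implications transparent.
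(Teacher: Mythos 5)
Your proof is correct and complete: reducing closure under multiplication to the single condition $(a^2,b^2)\in span\{(a,b)\}$ and then solving $a^2=\la a$, $b^2=\la b$ by cases is exactly the standard argument. The paper itself offers no proof of this lemma (it simply cites Folland, Lemma~4.46), and your argument is essentially the one found there, so there is nothing to add.
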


				We now recall a few known facts and definitions needed to prove Theorem~\ref{T2.0}. For a compact Hausdorff space $X$, we say a closed linear subspace $\A$ of $C(X)$ {\it separates the points of $X$} if for every two distinct points $x,y \in X$, there exists $f \in \mathcal{A}$ such that $f(x) \neq f(y)$. 
				
				The following result shows the interconnection between a subalgebra and a sublattice of $C(X)$.
				
				\begin{lem}[{\cite[Lemma~4.48]{Fo}}]\label{L0}
					Let $X$ be a compact Hausdorff space. If $\mathcal{A}$ is a closed linear subalgebra of $C(X)$, then $\mathcal{A}$ is a sublattice of $C(X)$.
				\end{lem}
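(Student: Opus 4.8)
The plan is to reduce the lattice condition to a single absolute-value statement and then to produce that absolute value by uniform polynomial approximation. Since $\A$ is a closed \emph{linear} subspace and the lattice operations satisfy $f \vee g = \tfrac{1}{2}(f+g+|f-g|)$ and $f \wedge g = \tfrac{1}{2}(f+g-|f-g|)$, it suffices to prove that $|h| \in \A$ whenever $h \in \A$. Indeed, granting this, for any $f,g \in \A$ the function $|f-g|$ lies in $\A$, and then $f \vee g$ and $f \wedge g$ are linear combinations of members of $\A$, hence themselves in $\A$.

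To show $|h| \in \A$, I would first normalise$\colon$ by scaling $h$ (using that $\A$ is closed under scalar multiplication and that $|\la h| = |\la|\,|h|$) I may assume $\|h\| \le 1$. The idea is then to write $|h| = \sqrt{h^{2}}$ and to approximate the square root uniformly by polynomials. The crucial point---and the one subtlety of the argument---is that $\A$ need not contain the constant functions, so I must approximate $\sqrt{s}$ on $[0,1]$ by polynomials whose \emph{constant term vanishes}; only then does the corresponding polynomial in $h^{2}$ land back inside $\A$.

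Such polynomials are furnished by the classical iteration $p_{0} \equiv 0$ and $p_{n+1}(s) = p_{n}(s) + \tfrac{1}{2}\bigl(s - p_{n}(s)^{2}\bigr)$. An elementary induction shows that $p_{n}(0)=0$ for every $n$, that $0 \le p_{n}(s) \le \sqrt{s}$ on $[0,1]$, and that $p_{n}(s)$ increases to $\sqrt{s}$ pointwise; Dini's theorem then upgrades this to uniform convergence on $[0,1]$. Because $\A$ is a subalgebra, $h^{2} \in \A$, and since each $p_{n}$ has no constant term, $p_{n}(h^{2})$ is a finite linear combination of the powers $h^{2},h^{4},\dots$, each of which lies in $\A$; hence $p_{n}(h^{2}) \in \A$. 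As $0 \le h^{2} \le 1$ pointwise, the uniform convergence of $p_{n}$ to the square root gives $p_{n}(h^{2}) \to \sqrt{h^{2}} = |h|$ in the supremum norm. Finally, since $\A$ is norm-closed, $|h| \in \A$, which completes the reduction and hence the proof. The main obstacle is precisely the absence of constants in $\A$; it is overcome by the constant-term-free approximation above, rather than by a direct appeal to the Weierstrass theorem for $|t|$, whose approximating polynomials generally carry a nonzero constant term.
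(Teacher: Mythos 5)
Your proof is correct and complete. The paper itself gives no proof of this lemma, citing Folland's Lemma~4.48 instead; your argument --- reducing the lattice operations to the single claim $|h|\in\mathcal{A}$ via $f\vee g=\tfrac12(f+g+|f-g|)$, and then producing $|h|=\sqrt{h^2}$ as a uniform limit of constant-term-free polynomials in $h^2$ built from the iteration $p_{n+1}(s)=p_n(s)+\tfrac12\bigl(s-p_n(s)^2\bigr)$ --- is essentially the standard proof given in that reference, including the key point that $\mathcal{A}$ need not contain the constants.
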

				
				Furthermore, for a compact Hausdorff space $X$ and a closed linear sublattice $\mathcal{A}$ of $C(X)$, a sufficient condition for a function in $C(X)$ to be in $\mathcal{A}$ is
				\begin{lem}[{\cite[Lemma~4.49]{Fo}}]\label{L2}
					Let $X$ be a compact Hausdorff space. Let $\mathcal{A}$ be a closed linear sublattice of $C(X)$ and $f \in C(X)$. If for every $x,y \in X$ there exists $g_{xy} \in \mathcal{A}$ such that $g_{xy}(x) = f(x)$ and $g_{xy}(y) =f(y)$ then $f \in \mathcal{A}$.
				\end{lem}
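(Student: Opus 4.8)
The plan is to exploit closedness of $\mathcal{A}$: since $\mathcal{A}$ is a closed linear subspace of $C(X)$, it suffices to show that for every $\varepsilon > 0$ there exists $h \in \mathcal{A}$ with $\|f - h\| \le \varepsilon$, and then conclude $f \in \mathcal{A}$. The whole argument rests on a two-stage compactness sweep, using repeatedly that a sublattice is stable under finite maxima and minima.

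First I would fix $\varepsilon > 0$ and a point $y \in X$, and manufacture a single $h_y \in \mathcal{A}$ that agrees with $f$ at $y$ and lies below $f + \varepsilon$ everywhere. To do this, for each $x \in X$ take the hypothesized $g_{xy} \in \mathcal{A}$ with $g_{xy}(x) = f(x)$ and $g_{xy}(y) = f(y)$. The set $U_x = \{z \in X \colon g_{xy}(z) < f(z) + \varepsilon\}$ is open by continuity of $g_{xy}$ and $f$, and it contains $x$. As $x$ ranges over $X$ these sets cover $X$, so by compactness finitely many $U_{x_1}, \ldots, U_{x_n}$ suffice; then $h_y := \min\{g_{x_1 y}, \ldots, g_{x_n y}\} \in \mathcal{A}$ satisfies $h_y < f + \varepsilon$ on all of $X$ (at each $z$ some $g_{x_k y}(z)$ is below $f(z)+\varepsilon$) while retaining $h_y(y) = f(y)$ (since every $g_{x_k y}(y) = f(y)$).

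Next I would sweep over $y$ to install the lower bound. For each $y \in X$ the set $V_y = \{z \in X \colon h_y(z) > f(z) - \varepsilon\}$ is open and contains $y$, so compactness gives a finite subcover $V_{y_1}, \ldots, V_{y_m}$, and $h := \max\{h_{y_1}, \ldots, h_{y_m}\} \in \mathcal{A}$ then satisfies $h > f - \varepsilon$ everywhere. Crucially, $h$ still satisfies $h < f + \varepsilon$, because each $h_{y_j}$ already obeyed the upper estimate and the maximum of finitely many such functions does too. Hence $f - \varepsilon < h < f + \varepsilon$ on $X$, giving $\|f - h\| \le \varepsilon$, and closedness of $\mathcal{A}$ finishes the proof.

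The step I expect to be the crux is the ordering of the two sweeps together with the correct choice of min versus max. The inner minimization over $x$ is what forces the upper estimate $f + \varepsilon$ while pinning down the value at $y$, and it is only because each $h_y$ inherits that upper bound that the outer maximization over $y$ can adjoin the lower estimate $f - \varepsilon$ without spoiling the upper one. Keeping the quantifiers straight, so that each finite lattice operation lands back inside $\mathcal{A}$, is the sole real subtlety; everything else is routine use of continuity and compactness.
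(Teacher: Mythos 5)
Your argument is correct and is precisely the standard two-stage compactness proof of this lemma (the one given in Folland, Lemma~4.49, which the paper cites rather than reproving): an inner minimization over $x$ to get $h_y < f+\varepsilon$ with $h_y(y)=f(y)$, an outer maximization over $y$ to get $f-\varepsilon < h < f+\varepsilon$, and closedness of $\mathcal{A}$ to conclude. No gaps.
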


				Let $T$ be a locally compact Hausdorff space. We denote the space of real-valued continuous functions on $T$ vanishing at infinity by $C_{0}(T)$. Further, let us denote $T_{\infty}$ as the one-point compactification of $T$ and $t_{\infty}$ be the \textquotedblleft point at infinity\textquotedblright. Consider the restriction map $\Phi \colon J_{t_\infty} \rightarrow C_0(T)$ defined as $\Phi(f) = f \vert_{T}$, for $f \in J_{t_\infty}$. Then the map $\Phi$ is an isometric lattice isomorphism (in other words, a map which preserves the distances as well as the lattice and linear structures). The identification above and Theorem~\ref{T2.0} help us to algebraically characterize the closed linear sublattices and subalgebras of the space $C_0(T)$ in Section~\ref{Sec3}. 
				 \section{Main Results}\label{Sec3}
				 Let us now prove our main result. 
				
				 \begin{proof}[Proof of Theorem~\ref{T2.0}]
				 	We assume that $X$ contains at least two points because if $X$ is a singleton set then $C(X)$ is simply $\mb{R}$ and the only closed linear sublattices or subalgebras are $\{0\}$ and $\mb{R}$. If $\mathcal{A}$ has the description as in (\ref{Eqn1.0}) then clearly $\mathcal{A}$ is a sublattice of $C(X)$.
				 	
				 	Now, assume that $\mathcal{A}$ is a sublattice of $C(X)$.
				 	For every two distinct points $x,y \in X$, define \[\mathcal{A}_{xy} = \{(g(x),g(y)) \in \mathbb{R}^{2}\colon g \in \mathcal{A}\}.\] Since $\mathcal{A}$ is a lattice, $\mathcal{A}_{xy}$ is a sublattice of $\mathbb{R}^{2}$ (under the operation of co-ordinate-wise maximum of a pair of vectors in $\mb{R}^{2}$). 
				 	
				 	{\sc Case 1$\colon$ }Assume $\mathcal{A}$ separates the points of $X$. 
				 	
				 	If for every $x,y \in X$, $\mathcal{A}_{xy} = \mathbb{R}^{2}$ then by Lemma~\ref{L2}, $\mathcal{A} = C(X)$. Hence $\mathcal{A}$ has the the description as in (\ref{Eqn1.0}). Otherwise, there exists two distinct points $x_0,y_0 \in X$ such that $\mathcal{A}_{x_{0}y_{0}}$ is a proper sublattice of $\mathbb{R}^{2}$. 
				 	Consider the following collection$\colon$ \[I = \{(t,s,\lambda)\in X\times X \times [0,1]\colon f(t) = \lambda f(s)\mbox{, for each }f \in \mathcal{A}\}.\] 
				 	
				 	We now show that $I \neq \emptyset$. Since $\mathcal{A}$ separates the points of $X$, $\mathcal{A}_{x_{0}y_{0}}$ cannot be $\{(0,0)\}$ or $span\{(1,1)\}$. Thus $\mathcal{A}_{x_{0}y_{0}} = span \{(a,b)\}$ for some $0 \leq a,b <1$ and $a \neq b$. 
				 	If $a=0$ and $b>0$ then for each $g \in \mathcal{A}$, $g(x_0)=0$ and hence $(x_0,y_0,0) \in I$. If $a>0$ and $b=0$ then for each $g \in \mathcal{A}$, $g(y_0)=0$ and hence $(y_0,x_0,0) \in I$. If without loss of generality $0<a<b$ then for each $g \in \mathcal{A}$, there exists $r_{g}\in  \mathbb{R}$ such that $(g(x_0), g(y_0)) =  (r_{g}a,r_{g}b)$. It follows that $g(x_0) =\frac{a}{b} g(y_0)$. Thus $(x_0,y_0,\frac{a}{b}) \in I$.
				 	
				 	We index $I$ by $I$ itself, that is, each element of $I$ is indexed by itself. Further, define \[\mathcal{A}^{\prime} = \{f \in C(X)\colon f(t_i) = \lambda_i f(s_i)\mbox{, for each } i\in I\}.\] By the definition of $I$, it is clear that $\mathcal{A} \subseteq \mathcal{A}^{\prime}$.
				 	
				 	We next show that $\mathcal{A}^{\prime} \subseteq \mathcal{A}$. Let $f \in \mathcal{A}^{\prime}$. In order to show $f \in \mathcal{A}$, by Lemma~\ref{L2}, it suffices to show that for each $x,y \in X$, there exists $g_{xy} \in \mathcal{A}$ such that $g_{xy}(x) = f(x)$ and $g_{xy} (y) = f(y)$. Therefore, it suffices to show that for each $x,y \in X$, $(f(x),f(y)) \in \mathcal{A}_{xy}$.
				 	
				 	Let $x, y \in X$. Since $\mathcal{A}$ separates the points of $X$, $\mathcal{A}_{xy}$ cannot be $\{(0,0)\}$ or $span\{(1,1)\}$. We remark here that in this case, we use the assumption that $\A$ separates the points of $X$ only to prove that $I \neq \es$ and to rule out the above two possibilities of $\mathcal{A}_{xy}$. 
				 	
				 	If $\mathcal{A}_{xy} = \mathbb{R}^{2}$ then clearly $(f(x),f(y)) \in \mathbb{R}^{2} = \mathcal{A}_{xy}$. 
				 	If $\mathcal{A}_{xy} = span \{(0,1)\}$ then $(0,f(y)) \in \mathcal{A}_{xy}$ and for each $g \in \mathcal{A}$, $g(x) = 0$. Thus $(x,y,0) \in I$. Since $f \in \mathcal{A}^{\prime}$, $f(x) = 0$. Hence $(f(x), f(y)) \in \mathcal{A}_{xy}$. Similar arguments hold if $\mathcal{A}_{xy} = span\{(1,0)\}$.
				 	
				 	Without loss of generality, let $0< a<b <1$. Consider $\mathcal{A}_{xy} = span\{(a,b)\}$ then for each $g \in \mathcal{A}$, $g(x) = \frac{a}{b} g(y)$. Thus $(x,y,\frac{a}{b}) \in I$. Since $f \in \mathcal{A}^{\prime}$, let $\frac{f(x)}{a} = \frac{f(y)}{b} = r$ (say). It follows that $(f(x),f(y)) \in span\{(a,b)\} = \mathcal{A}_{xy}$.          	   
				 	
				 	{\sc Case 2$\colon$ }Assume that $\mathcal{A}$ does not separate the points of $X$. Thus there exists two distinct points $x_0,y_0 \in X$ such that $f(x_0) = f(y_0)$, for each $f \in \mathcal{A}$. Consider the following collection$\colon$ \[I = \{(t,s,\lambda)\in X\times X \times [0,1]\colon f(t) = \lambda f(s)\mbox{, for each }f \in \mathcal{A}\}.\] Since $(x_0,y_0,1), (y_0,x_0,1) \in I$, clearly $I \neq \emptyset$. We index $I$ by $I$ itself, that is, each element of $I$ is indexed by itself. Let us define \[\mathcal{A}^{\prime} = \{f \in C(X)\colon f(t_i) = \lambda_i f(s_i)\mbox{, for each } i\in I\}.\] Clearly $\mathcal{A} \subseteq \mathcal{A}^{\prime}$. In order to show $\mathcal{A}^{\prime} \subseteq \mathcal{A}$, by Lemma~\ref{L2} it suffices to show that for each $f \in \mathcal{A}^{\prime}$ and each $x,y \in X$, $(f(x),f(y)) \in \mathcal{A}_{xy}$. 
				 	
				 	Let $x,y \in X$. We first consider the following possibilities of $\mathcal{A}_{xy}\colon$ $\mb{R}^{2}$, $span\{(0,1)\}$, $span\{(1,0)\}$ and $span\{(a,b)\}$ for those $a,b \in \mb{R}$ satisfying $0< a < b<1$ (without loss of generality). For each of the above possibilities, we apply arguments similar to that used in {\sc Case 1} to show that $(f(x),f(y)) \in \mathcal{A}_{xy}$. 
				 	
				 	In this case, due to our assumption that $\A$ does not separate the points of $X$, we need to consider the two possibilities which we rule out in {\sc Case 1}. They are as follows$\colon$
				 	If $\mathcal{A}_{xy} = \{(0,0)\}$ then for each $g \in \mathcal{A}$, $g(x) = 0= g(y)$. Hence, $(x,y,0), (y,x,0) \in I$. Since $f \in \mathcal{A}^{\prime}$, $(f(x),f(y)) = (0,0) \in \mathcal{A}_{xy}$. 
				 	If $\mathcal{A}_{xy} = span\{(1,1)\}$ then for each $g \in \mathcal{A}$, $g(x) = g(y)$. Thus $(x,y,1) \in I$. Since $f \in \mathcal{A}^{\prime}$, $(f(x),f(y)) = (f(x),f(x)) \in \mathcal{A}_{xy}$.
				 \end{proof}
				 
				 The closed linear subalgebras of the $C(X)$ space has a representation similar to that in (\ref{Eqn1.0}) with the value of the coefficients $\lambda_{i}$ being either $0$ or $1$. With the help of Lemmas~\ref{L1}, \ref{L0} and \ref{L2}, the following result is proved using a similar argument as in Theorem~\ref{T2.0} and hence we omit it.
				 \begin{thm}\label{T2}
				 	Let $X$ be a compact Hausdorff space. Let $\mathcal{A}$ be a closed linear subspace of $C(X)$. Then $\mathcal{A}$ is a subalgebra of $C(X)$ if and only if there exists an index set $I$ and co-ordinates $(t_i,s_i,\lambda_i) \in X \times X\times \{0,1\}$, for each $i \in I$ such that 
				 	\begin{equation}\label{Eqn1}
				 		\mathcal{A} = \{f \in C(X)\colon f(t_i)= \lambda_i f(s_i)\mbox{, for each }i \in I\}.
				 	\end{equation}	
				 \end{thm}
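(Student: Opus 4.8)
The plan is to follow the proof of Theorem~\ref{T2.0} almost verbatim, replacing the classification of the closed linear sublattices of $\mb{R}^2$ (Lemma~\ref{L1.1}) with the classification of its subalgebras (Lemma~\ref{L1}). For the easy direction, suppose $\A$ has the form in (\ref{Eqn1}). Given $f,g \in \A$ and an index $i \in I$, I would verify $(fg)(t_i) = \la_i (fg)(s_i)$ by splitting into the two cases $\la_i = 0$ and $\la_i = 1$: if $\la_i = 0$ then $f(t_i) = g(t_i) = 0$, so $(fg)(t_i) = 0$; if $\la_i = 1$ then $f(t_i) = f(s_i)$ and $g(t_i) = g(s_i)$, so $(fg)(t_i) = f(s_i)g(s_i) = (fg)(s_i)$. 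Hence $fg \in \A$ and $\A$ is a subalgebra. This step is exactly where the restriction $\la_i \in \{0,1\}$ (rather than $\la_i \in [0,1]$) is essential, since a product does not respect a proportionality constant strictly between $0$ and $1$.

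For the converse, assume $\A$ is a subalgebra. By Lemma~\ref{L0}, $\A$ is then a sublattice of $C(X)$, which makes Lemma~\ref{L2} available. As before, for $x,y \in X$ I set $\A_{xy} = \{(g(x),g(y)) : g \in \A\}$; since the evaluation map $g \mapsto (g(x),g(y))$ is an algebra homomorphism, $\A_{xy}$ is now a \emph{subalgebra} of $\mb{R}^2$, so by Lemma~\ref{L1} it equals one of $\{(0,0)\}$, $\mb{R}^2$, $span\{(1,0)\}$, $span\{(0,1)\}$ or $span\{(1,1)\}$. I then define
\[ I = \{(t,s,\la) \in X \times X \times \{0,1\} : f(t) = \la f(s) \text{ for each } f \in \A\} \]
and $\A' = \{f \in C(X) : f(t_i) = \la_i f(s_i) \text{ for each } i \in I\}$. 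The inclusion $\A \ci \A'$ is immediate from the definition of $I$.

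The reverse inclusion $\A' \ci \A$ is the heart of the argument. Fixing $f \in \A'$, by Lemma~\ref{L2} it suffices to show $(f(x),f(y)) \in \A_{xy}$ for all $x,y \in X$, and I would do this by running through the five possibilities for $\A_{xy}$ from Lemma~\ref{L1}. If $\A_{xy} = \mb{R}^2$ the membership is automatic. If $\A_{xy} = \{(0,0)\}$ then every $g \in \A$ vanishes at both $x$ and $y$, so $(x,y,0),(y,x,0) \in I$ and hence $f(x) = f(y) = 0$. If $\A_{xy} = span\{(0,1)\}$ (respectively $span\{(1,0)\}$) then every $g \in \A$ vanishes at $x$ (respectively $y$), giving $(x,y,0) \in I$ (respectively $(y,x,0) \in I$) and forcing the corresponding coordinate of $f$ to vanish. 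Finally, if $\A_{xy} = span\{(1,1)\}$ then $g(x)=g(y)$ for every $g \in \A$, so $(x,y,1) \in I$ and $f(x) = f(y)$. In each case the constraint placing $(f(x),f(y))$ inside $\A_{xy}$ uses only a coefficient $\la \in \{0,1\}$.

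I expect the main point here to be conceptual rather than a genuine obstacle: the subalgebra case is in fact \emph{cleaner} than the sublattice case. Lemma~\ref{L1} contains no one dimensional subalgebra of the form $span\{(a,b)\}$ with $0 < a < b < 1$, precisely because such a line is not closed under coordinate-wise multiplication. Consequently every possible shape of $\A_{xy}$ yields a defining relation with $\la \in \{0,1\}$, and, unlike in Theorem~\ref{T2.0}, there is no need to separate the proof into the cases where $\A$ does or does not separate the points of $X$; all five cases are treated uniformly. The only auxiliary input beyond the $\mb{R}^2$ classification is Lemma~\ref{L0}, which is what legitimizes invoking the sublattice membership criterion Lemma~\ref{L2} in this algebra setting.
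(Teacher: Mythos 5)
Your proposal is correct and is exactly the argument the paper has in mind: the paper omits the proof of Theorem~\ref{T2} but states that it follows from Lemmas~\ref{L1}, \ref{L0} and \ref{L2} by the same scheme as Theorem~\ref{T2.0}, which is precisely what you carry out. Your observation that all five cases for $\mathcal{A}_{xy}$ yield coefficients in $\{0,1\}$, so no case split on point separation is needed, is a sound and accurate simplification.
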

				 
				The following characterizations of the sublattices and subalgebras of $C_{0}(T)$ for a locally compact Hausdorff space $T$ follows directly from Theorems~\ref{T2.0} and \ref{T2} and the fact that $C_{0}(T)$ is isometrically lattice isomorphic to the subalgebra $J_{t_{\iy}}$ of $C(T_{\iy})$.
				 
				 \begin{cor}
				 	Let $T$ be a locally compact Hausdorff space. Let $\mathcal{A}$ be a closed linear subspace of $C_{0}(T)$. Then
				 	\blr
				 		\item $\mathcal{A}$ is a sublattice of $C_{0}(T)$ if and only if there exists an index set $I$ and co-ordinates $(t_i,s_i,\lambda_{i}) \in T \times T\times [0,1]$, for each $i \in I$ such that 
				 		\begin{equation}
				 			\mathcal{A} = \{f \in C_{0}(T)\colon f(t_i)= \lambda_i f(s_i)\mbox{, for each }i \in I\}.
				 		\end{equation}	
				 		\item $\mathcal{A}$ is a subalgebra of $C_{0}(T)$ if and only if there exists an index set $I$ and co-ordinates $(t_i,s_i,\lambda_{i}) \in T \times T\times \{0,1\}$, for each $i \in I$ such that 
				 		\begin{equation}
				 			\mathcal{A} = \{f \in C_{0}(T)\colon f(t_i)= \lambda_{i} f(s_i)\mbox{, for each }i \in I\}.
				 		\end{equation}	
				 	\el
				 \end{cor}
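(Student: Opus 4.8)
The plan is to mirror the proof of Theorem~\ref{T2.0}, substituting the algebraic structure for the lattice structure at each step and exploiting Lemma~\ref{L1} in place of Lemma~\ref{L1.1}. For the easy (``if'') direction, I would observe that whenever $f,g$ both satisfy $f(t_i)=\lambda_i f(s_i)$ and $g(t_i)=\lambda_i g(s_i)$, their product satisfies $(fg)(t_i)=\lambda_i^{2}(fg)(s_i)$; the decisive point, which forces the coefficients to lie in $\{0,1\}$ rather than in $[0,1]$, is that $\lambda_i^{2}=\lambda_i$ precisely when $\lambda_i\in\{0,1\}$, so that the constraint set in (\ref{Eqn1}) is closed under pointwise multiplication (closure under the linear operations being immediate). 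This single identity is the conceptual heart that distinguishes the subalgebra case from the sublattice case.

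For the converse, suppose $\mathcal{A}$ is a closed linear subalgebra. First I would invoke Lemma~\ref{L0} to conclude that $\mathcal{A}$ is also a closed linear sublattice, so that the sufficiency criterion of Lemma~\ref{L2} remains available. As before, for $x,y\in X$ set $\mathcal{A}_{xy}=\{(g(x),g(y)):g\in\mathcal{A}\}$; since the evaluation map $g\mapsto(g(x),g(y))$ is both linear and multiplicative, $\mathcal{A}_{xy}$ is now a subalgebra of $\mb{R}^{2}$, and Lemma~\ref{L1} pins it down to one of the five possibilities $\{(0,0)\}$, $\mb{R}^{2}$, $span\{(1,0)\}$, $span\{(0,1)\}$, $span\{(1,1)\}$. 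I would then define $I=\{(t,s,\lambda)\in X\times X\times\{0,1\}:f(t)=\lambda f(s)\text{ for all }f\in\mathcal{A}\}$ and $\mathcal{A}^{\prime}=\{f\in C(X):f(t_i)=\lambda_i f(s_i)\text{ for all }i\in I\}$, noting that $\mathcal{A}\subseteq\mathcal{A}^{\prime}$ holds by construction.

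The remaining inclusion $\mathcal{A}^{\prime}\subseteq\mathcal{A}$ would follow from Lemma~\ref{L2} once I show $(f(x),f(y))\in\mathcal{A}_{xy}$ for every $f\in\mathcal{A}^{\prime}$ and all $x,y$, which I would verify by running through the five cases above. Each case produces the appropriate membership in $I$ and fixes the value of $f$: the cases $\{(0,0)\}$, $span\{(1,0)\}$, $span\{(0,1)\}$ place $(x,y,0)$ or $(y,x,0)$ into $I$ and pin the relevant coordinate of $f$ to $0$; the case $span\{(1,1)\}$ places $(x,y,1)$ into $I$ and forces $f(x)=f(y)$; and $\mathcal{A}_{xy}=\mb{R}^{2}$ is automatic. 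Notably, because $\{0,1\}$-coefficients suffice to encode all five degenerate and one-dimensional possibilities, no dichotomy between the point-separating and non-point-separating situations is needed here, unlike in Theorem~\ref{T2.0}, where the continuum of lattices $span\{(a,b)\}$ with $0\le a,b\le 1$ demanded more care. I expect the only genuine subtlety to be the multiplicativity check in the ``if'' direction, namely recognizing that the restriction to $\lambda_i\in\{0,1\}$ is exactly what guarantees closure under products; the converse is then a routine transcription of the argument for Theorem~\ref{T2.0}.
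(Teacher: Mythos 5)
Your proposal proves the wrong statement. What you have written is a reasonable sketch of Theorem~\ref{T2}, the characterization of the closed linear subalgebras of $C(X)$ for a \emph{compact} Hausdorff space $X$: the identity $\lambda_i^{2}=\lambda_i$, the five subalgebras of $\mathbb{R}^{2}$ from Lemma~\ref{L1}, the reduction to sublattices via Lemma~\ref{L0}, and the two-point interpolation criterion of Lemma~\ref{L2} are precisely the ingredients of that argument. But the statement at hand is the corollary about $C_{0}(T)$ for a \emph{locally compact} Hausdorff space $T$; your proof never touches $T$, $C_{0}(T)$, or local compactness, and part (i) --- the sublattice characterization of $C_{0}(T)$ --- is not addressed at all.

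The actual content of the corollary is the transfer from the compact to the locally compact setting. The paper's route is to identify $C_{0}(T)$ isometrically and lattice-isomorphically with the subspace $J_{t_{\infty}}=\{f\in C(T_{\infty})\colon f(t_{\infty})=0\}$ of $C(T_{\infty})$, where $T_{\infty}$ is the one-point compactification, apply Theorems~\ref{T2.0} and~\ref{T2} there, and pull the description back through the restriction map $\Phi$. You cannot simply run your $C(X)$ argument on $T$ itself: Lemma~\ref{L2} is stated and proved for compact $X$, and its proof rests on a finite-subcover argument that is unavailable for non-compact $T$. If you take the compactification route, you should also observe that any constraint involving the point at infinity can be re-expressed using only coordinates in $T\times T$: since every $f\in J_{t_{\infty}}$ satisfies $f(t_{\infty})=0$, a relation of the form $f(t)=\lambda f(t_{\infty})$ reduces to $f(t)=0\cdot f(s)$ for an arbitrary $s\in T$, and $f(t_{\infty})=\lambda f(s)$ reduces either to a vacuous condition (when $\lambda=0$) or to $f(s)=0\cdot f(s')$. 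This is the step that lets the coordinates in the corollary be taken in $T\times T\times[0,1]$ (respectively $T\times T\times\{0,1\}$) rather than in $T_{\infty}\times T_{\infty}\times[0,1]$, and it is missing from your write-up along with everything else specific to the locally compact case.
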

				 
				\begin{rem}\label{R1}
					Kakutani provided a precise identification of a much general class of Banach spaces, namely abstract $(M)$-spaces (see \cite[pg.~994]{Ka} for the definition), with a subspace of $C(X)$ for some compact Hausdorff space $X$. Let $X$ be a compact Hausdorff space and $\A$ be a closed subspace of $C(X)$. If $\A$ is an abstract $(M)$-space, then by \cite[Theorem~1, pg.~998]{Ka}, there exists a compact Hausdorff space $\Omega$ such that $\A$ is isometric and lattice isomorphic to the subspace, $\{f \in C(\Omega)\colon f(t_i)= \lambda_i f(s_i)\mbox{, for each }i \in I\}$, of $C(\Omega)$ for some index set $I$ and co-ordinates $(t_i,s_i,\la_i) \in \Omega \times \Omega \times [0,1]$ for $i \in I$. However, it is not necessary that $\A$, being a subspace of $C(X)$, has a description  in $C(X)$ as given in (\ref{Eqn1.0}). For example, consider $\A$ to be the space of real-valued affine continuous functions on $[0,1]$. Now, $\A$ is a closed subspace of $C([0,1])$ but not a sublattice of $C([0,1])$. Therefore, by Theorem~\ref{T2.0}, $\A$ does not have a description as in (\ref{Eqn1.0}), for any given subfamily of co-ordinates in $[0,1] \times [0,1] \times [0,1]$. Nevertheless, it is easy to see that $\A$ is isometric and lattice isomorphic to $C(\{0,1\})$ and hence is an abstract $(M)$-space.
				\end{rem}
			
			    \begin{ack}
			    	The author would like to thank Prof. Dirk Werner for indicating the Remark.
			    \end{ack}

			\end{document}